\newtheorem{defn}{Definition}[section]
\newtheorem{thm}[defn]{Theorem}
\newtheorem{lem}[defn]{Lemma}
\newtheorem{cor}[defn]{Corollary}
\newtheorem{ex}[defn]{Example}
\newtheorem{re}[defn]{Remark}
\begin{document}
\title{{\bf Biderivations and triple homomorphisms on perfect Jordan algebras}}
\author{\normalsize \bf Chenrui Yao,  Yao Ma,  Liangyun Chen}
\date{{\small{ School of Mathematics and Statistics,  Northeast Normal University,\\ Changchun 130024, CHINA
}}} \maketitle
\date{}

   {\bf\begin{center}{Abstract}\end{center}}

In this paper, we mainly study a class of biderivations and triple homomorphisms on perfect Jordan algebras. Let $J$ be a Jordan algebra and $\delta :J \times J \rightarrow J$ a symmetric biderivation satisfying $\delta(w , u \circ v) = w \cdot \delta(u , v), \forall u,v,w \in J$. If $J$ is perfect and satisfies $Z(J) = \{0\}$, then $\delta$ is of the form $\delta(x , y) = \gamma(x \circ y)$ for all $x , y \in J$, where $\gamma \in Cent(J)$ satisfying $z \cdot \gamma(x \circ y) = x \cdot \gamma(y \circ z) + y \cdot \gamma(x \circ z), \forall x , y , z \in J$. This is the special case of our main theorem which concerns biderivations having their range in a $J$-module. What's more, we give an algorithm which can be applied to find biderivations satisfying $\delta(w , u \circ v) = w \cdot \delta(u , v), \forall u,v,w \in J$ on any Jordan algebra. We also show that for a triple homomorphism between perfect Jordan algebras, $f(x^{2}) = (f(x))^{2}$ or $f(x^{2}) = -(f(x))^{2}$. As an application, such $f$ is a homomorphism if and only if $f(x^{2}) = (f(x))^{2}$. Moreover, we give an algorithm which can be applied to any Jordan algebra.

\noindent\textbf{Keywords:} \,  Biderivations, Jordan Algebras, Centroid, Triple homomorphisms, Homomorphisms.\\
\textbf{2010 Mathematics Subject Classification:} 17C10, 17C99, 16R60, 15A86.
\renewcommand{\thefootnote}{\fnsymbol{footnote}}
\footnote[0]{ Corresponding author(L. Chen): chenly640@nenu.edu.cn.}
\footnote[0]{Supported by  NNSF of China (Nos. 11771069 and 11801066), NSF of  Jilin province (No. 20170101048JC),   the project of Jilin province department of education (No. JJKH20180005K) and the Fundamental Research Funds for the Central Universities(No. 130014801).}

\section{Introduction}

The concept of Jordan algebras was first introduced by P. Jordan in his study of quantum mechanics. Recall that a Jordan algebra $J$ is an algebra over a field $\rm{F}$ satisfying
\begin{enumerate} [(i)]
\item $x \circ y = y \circ x,\quad\forall x , y \in J$,
\item $(x^{2} \circ y) \circ x = x^{2} \circ (x \circ y),\quad x^{2} = x \circ x ,\quad\forall x , y \in J$.
\end{enumerate}

In \cite{A1 , A2}, A. A. Albert renamed them Jordan algebras and developed a successful structure theory for Jordan algebras over any field of characteristic zero. Using similar research methods in Lie algebras, he proved Lie's theorem, Engel's theorem and Cartan's theorem on solvable Jordan algebras, which are famous on solvable Lie algebras.

As we know, an associative algebra $A$ with the multiplication defined by
\[x \circ y = \frac{1}{2}(xy + yx)\]
forms a Jordan algebra, which is denoted by $A^{+}$. This kind of Jordan algebras is called the special Jordan algebras; the others are called exceptional. In \cite{NJ1}, N. Jacobson classified all finite-dimensional simple Jordan algebras over algebraically closed field $\rm{F}$ whose characteristic is not two:
\begin{enumerate} [(A)]
\item The special Jordan algebra $\rm{F}^{n \times n^{+}}$;
\item $H(\rm{F}^{n \times n} , j_{1})$, where $j_{1}(X) = X^{'}$;
\item
$H(\rm{F}^{2n \times 2n} , j_{S})$, where $j_{S} (X) = S^{'}X^{'}S, S = diag(Q , Q , \cdots , Q)$,
$Q = \begin{pmatrix}
0&1\\
1&0
\end{pmatrix}$;
\item $\rm{F}e_{1} + V$, which is isomorphic to a subalgebra of $Cl(V , f)^{+}$;
\item $H(C^{3 \times 3} , \Gamma), \Gamma = I_{3}$, which is a 27-dimensional exceptional Jordan algebra.
\end{enumerate}
What's more, there is a close relationship between vertex operator algebras and simple Jordan algebras. C. H. Lam constructed vertex operator algebras whose Griess algebras are simple Jordan algebras of type A, B and C in his paper \cite{CL1 , CL2}. As for simple Jordan algebras of type D, A. Takahiro gave the construction in the case that the Jordan algebras are of low rank in \cite{TA1}.

Representation theory has been studied extensively as an important tool to study the structure of an algebra. N. Jacobson gave the definition of the representations and modules of a Jordan algebra in \cite{N1}, which are in the following.

A Jordan module is a system consisting of a vector space $V$, a Jordan algebra $J$, and two compositions $x \cdot a$, $a \cdot x$ for $x$ in $V$, $a$ in $J$ which are bilinear and satisfy
\begin{enumerate} [(i)]
\item $a \cdot x = x \cdot a$,
\item $(x \cdot a) \cdot (b \circ c) + (x \cdot b) \cdot (c \circ a) + (x \cdot c) \cdot (a \circ b) = (x \cdot (b \circ c)) \cdot a + (x \cdot (c \circ a)) \cdot b + (x \cdot (a \circ b)) \cdot c$,
\item $(((x \cdot a) \cdot b) \cdot c) + (((x \cdot c) \cdot b) \cdot a) + x \cdot (a \circ c \circ b) = (x \cdot a) \cdot (b \circ c) + (x \cdot b) \cdot (c \circ a) + (x \cdot c) \cdot (a \circ b)$,
\end{enumerate}
where $x \in V, a , b , c \in J$, and $a_{1} \circ a_{2} \circ a_{3}$ stands for $((a_{1} \circ a_{2}) \circ a_{3})$.

A linear map $a \rightarrow S_{a}$ of a Jordan algebra $J$ into the algebra of linear transformations of a vector space $V$ over $\rm{F}$ is called a representation if for all $a , b , c \in J$,
\begin{enumerate} [(i)]
\item $[S_{a}S_{b \circ c}] + [S_{b}S_{c \circ a}] + [S_{c}S_{a \circ b}] = 0$,
\item $S_{a}S_{b}S_{c} + S_{c}S_{b}S_{a} + S_{(a \circ c) \circ b} = S_{a}S_{b \circ c} + S_{b}S_{c \circ a} + S_{c}S_{a \circ b}$,
\end{enumerate}
where $[AB]$ denotes $AB - BA$.

There is also a lot of research on representations of Jordan algebras. For instance, A. A. Albert obtained a complete result about the special representations of finite-dimensional semi-simple Jordan algebras over a field of characteristic zero in \cite{A1}. What's more, in \cite{PJE1}, P. Jordan found that simple Jordan algebras have only a finite number of non-equivalent irreducible representations which are homomorphisms into a space of linear operators with operation $X \cdot Y = XY + YX$. A. A. Albert proved that the exceptional Jordan algebra $E_{3}$ had no such representations at all in \cite{A3}. F. Marcelo and L. Alicia gave a characterization of representations and irreducible modules on generalized almost-Jordan algebras in \cite{FL1}.

Recall that a derivation on an algebra $A$ is a linear map $D : A \rightarrow A$ satisfying
\[D(xy) = D(x)y + xD(y),\quad\forall x , y \in A.\]

In \cite{ALM1}, A. Carotenuto, L. Dabrowski and M. Dubois-Violette gave the definition of a derivation from a Jordan algebra $J$ to a $J$-module $M$, i.e., the linear map $d : J \rightarrow M$ satisfying
\[d(x \circ y) = x \cdot d(y) + y \cdot d(x),\quad\forall x , y \in J.\]
If we take $M = J$ and $a \cdot x = a \circ x$, then $d$ is the usual derivation of Jordan algebras.

It is well-known that derivations play an indispensable role in the study of the structure of an algebra. And there has been a lot of research on them. B. Jeffrey, I. N. Herstein and L. Charles began the study of contacting the structure of a ring with the special behavior of one of its derivations in \cite{BHL1}. In \cite{BC1}, B. Jeffrey and L. Carini began to study associative rings admitting a derivation with invertible values on some non-central Lie ideals.

As a generalization of derivations, there  are more and more studies on biderivations in recent years. The study of commuting maps and (skew-symmetric) biderivations plays an important role in associative ring theory \cite{M1, MWC}. In E. Daniel's study \cite{DE1}, he described the form of biderivations of the algebra $A \otimes S$ where $A$ denoted a noncommutative unital prime algebra and $S$ denoted a commutative unital algebra over a field $\rm{F}$. In \cite{MK}, M. Ber$\check{s}$ar and K. M. Zhao studied biderivations on Lie algebras by using a more convenient and universal method.

Inspired by \cite{MK}, in this paper, we mainly study symmetric biderivations on Jordan algebras. A biderivation from a Jordan algebra $J$ to a $J$-module $M$ is defined as a bilinear map $\delta : J \times J \rightarrow M$ satisfying
\begin{enumerate} [(i)]
\item $\delta(x \circ y , z) = x \cdot \delta(y , z) + y \cdot \delta(x , z)$,
\item $\delta(x , y \circ z) = y \cdot \delta(x , z) + z \cdot \delta(x , y)$.
\end{enumerate}

If $\delta$ satisfies
\[\delta(x , y) = \delta(y , x),\quad\forall x , y \in J,\]
then $\delta$ is called a symmetric biderivation.

If $\delta$ satisfies
\[\delta(x , y) = -\delta(y , x),\quad\forall x , y \in J,\]
then $\delta$ is called a skew-symmetric biderivation.

It's obvious that $x \mapsto \delta(x , z)$ is a derivation for every $ z \in J$ and $x \mapsto \delta(z , x)$ is also a derivation if $\delta$ is either symmetric or skew-symmetric.

\begin{ex}\label{ex:1.1}
Let $A$ be a commutative associative algebra and $A^{+}$ the special Jordan algebra generated by $A$. Suppose that $D : A \rightarrow A$ is a derivation on $A$. Define $\delta : A \times A \rightarrow A$ by $\delta(x, y) = D(x)D(y)$. One can verify that $\delta$ is a symmetric biderivation on $A$. According to Lemma \ref{le:2.2}, $\delta$ is also a symmetric biderivation on $A^{+}$. Hence, for special Jordan algebras generated by commutative associative algebras, there exist many symmetric biderivations naturally.
\end{ex}

In the following, we need the notion ``centroid" to characterize biderivations. The centroid of a non-associative algebra $A$ is the set
\[\{f \in End(A) | f(ab) = f(a)b = af(b),\quad\forall a , b \in A\},\]
and we denote it by $Cent(A)$.

Centroid can be used to study the structure of algebras. For instance, R. E. Block characterized the structure of differential simple algebra via studying its centroid in \cite{REB1}. V. G. Kac \cite{VGK1} and S. J. Cheng \cite{SJC1} generalized the above result to $Z_{2}$-graded algebras. Naturally, using the centroid, they studied the structure of Lie superalgebras and concluded the theorem of the classification of simple Lie superalgebras, which is viewed as the most classical result in this field.

As a generalization, we now give the definition of the centroid of a $J$-module $M$ for a Jordan algebra $J$, which is denoted by $Cent(M)$. A linear map $\gamma : J \rightarrow M$ belongs to $Cent(M)$ if for all $x , y \in J$,
\[\gamma(x \circ y) = x \cdot \gamma(y).\]



Homomorphisms have been a historic subject for many years. Since homomorphisms can be used to describe the relationship between algebras, it also be widely studied. A homomorphism from $A_{1}$ to $A_{2}$ is a linear map $f : A_{1} \rightarrow A_{2}$ satisfying
\[f(xy) = f(x)f(y),\quad\forall x, y \in A_{1}.\]

As a generalization of homomorphisms, triple homomorphism from a Lie algebra $L_{1}$ to a Lie algebra $L_{2}$ is a linear map $f : L_{1} \rightarrow L_{2}$ satisfying
\[f([[x, y], z]) = [[f(x), f(y)], f(z)],\quad\forall x, y , z \in L_{1}.\]
Similarly, one can define triple homomorphisms between Jordan algebras. Even more interesting is the relationship between homomorphisms and triple homomorphisms. Obviously, both homomorphisms and anti-homomorphisms between Lie algebras are triple homomorphisms. Whether triple homomorphisms are  homomorphisms or anti-homomorphi-sms or not? The answer is given by J. H. Zhou. In \cite{Z1}, he proved that if $L$ is a perfect Lie algebra, then triple homomorphisms are either homomorphisms or anti-homomorphisms or direct sum of homomorphisms and anti-homomorphisms. What is the answer to Jordan algebras?
In this paper, we mainly study on perfect Jordan algebras and get a conclusion that for any triple homomorphism $f$ between prefect Jordan algebras, $f(x^{2}) = (f(x))^{2}$ or $f(x^{2}) = -(f(x))^{2}$(see Theorem \ref{thm:3.3}).

The paper is organized as follows. In Section \ref{se:1}, we first give Lemma \ref{le:2.2} as a proposition about biderivations. Next we reach the main result, Theorem \ref{thm:2.4}, and two example are provided to show that the condition (\ref{eq:1}) in Theorem \ref{thm:2.4} makes sense and isn't constant. Then we prove two important lemmas, Lemma \ref{le:2.10} and Lemma \ref{le:2.12}, which are essential for the algorithm which can be used to find biderivations satisfying (\ref{eq:1}) on any Jordan algebra. At last, we give an example to apply the algorithm.

In Section \ref{se:2}, we'll give a Lemma, Lemma \ref{le:3.1}, which can be used in the following. Next we'll prove our main result Theorem \ref{thm:3.3}. As an application, we come to a conclusion that for a triple homomorphism $f$ between perfect Jordan algebras, $f$ is a homomorphism if and only if $f(x^{2}) = (f(x))^{2}$, written as Corollary \ref{cor:3.4}. In the last, we'll give a theorem, Theorem \ref{thm:3.7}, which can be applied to any Jordan algebra.

Now we give a basic definition.
\begin{defn}
Let $J$ be a Jordan algebra over field $\rm{F}$ and $M$ a $J$-module. For any subset $S$ of $J$, we set
\[Z_{M}(S) = \{v \in M | S \cdot v = 0\}.\]
\end{defn}


We specify below: the symbol ``$\circ$ " stands for the multiplication in Jordan algebras, and the symbol ``$\cdot$ " stands for the module action. We denote the derived algebra of Jordan algebra $J$ by $J^{'}$ and $Z_{J}(J)$ by $Z(J)$.

\section{Biderivations on Jordan algebras}\label{se:1}

Throughout this part, we suppose $char \rm{F} \neq 2 , 3$.

\begin{defn} \cite{M2}
Let $A$ be an associative algebra. A bilinear map $\delta : A \times A \rightarrow A$ is called a biderivation if for all $x , y , z \in A$,
\begin{enumerate} [(i)]
\item $\delta(xy , z) = \delta(x , z)y + x\delta(y , z)$,
\item $\delta(x , yz) = \delta(x , y)z + y\delta(x , z)$.
\end{enumerate}
\end{defn}
\begin{lem}\label{le:2.2}
Let $A$ be an associative algebra and $A^{+}$ the corresponding special Jordan algebra with
\[x \circ y = \frac{1}{2}(xy + yx),\quad\forall x , y \in A.\]
Suppose that $\delta : A \times A \rightarrow A$ is a biderivation on $A$. Then $\delta$ is also a biderivation on $A^{+}$.
\end{lem}
\begin{proof}
For all $x , y , z \in A^{+}$,
\begin{align*}
&\delta(x \circ y , z) = \delta(\frac{1}{2}xy + \frac{1}{2}yx , z)\\
&= \frac{1}{2}\delta(xy , z) + \frac{1}{2}\delta(yx , z)\\
&= \frac{1}{2}\left(\delta(x , z)y + x\delta(y , z)\right) + \frac{1}{2}\left(\delta(y , z)x + y\delta(x , z)\right)\\
&= \frac{1}{2}\left(\delta(x , z)y + y\delta(x , z)\right) + \frac{1}{2}\left(x\delta(y , z) + \delta(y , z)x\right)\\
&= \delta(x , z) \circ y + x \circ \delta(y , z).
\end{align*}
Similarly, we have
\[\delta(x , y \circ z) = \delta(x , y) \circ z + y \circ \delta(x , z).\]
Hence, $\delta$ is also a biderivation on $A^{+}$.
\end{proof}
\begin{cor}\label{cor:2.3}
Let $A$ be a commutative associative algebra and $\delta$ a symmetric biderivation on $A$ satisfying
\[\delta(xy , z) = z\delta(x , y), \quad\forall x , y , z \in A.\]
Then
\[\delta(x \circ y , z) = z \circ \delta(x , y), \quad\forall x , y , z \in A^{+}.\]
\end{cor}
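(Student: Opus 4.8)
The plan is to reduce the asserted Jordan identity to the associative identity already supplied in the hypothesis, by exploiting the fact that on a commutative associative algebra the Jordan product is literally the associative product.

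First I would record the elementary but decisive observation that, since $A$ is commutative, for all $a, b \in A$ we have $a \circ b = \frac{1}{2}(ab + ba) = ab$ (the characteristic being different from $2$ is what makes the factor $\frac{1}{2}$ available). Hence on $A^{+}$ the symbol ``$\circ$'' agrees verbatim with the multiplication of $A$; in particular $x \circ y = xy$, and, since $\delta(x,y) \in A$, also $z \circ \delta(x,y) = z\,\delta(x,y)$.

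With this identification in hand the conclusion follows in a single line:
\[
\delta(x \circ y, z) = \delta(xy, z) = z\,\delta(x,y) = z \circ \delta(x,y),
\]
where the first and last equalities are the coincidence of products just noted, and the middle equality is exactly the assumed relation $\delta(xy,z)=z\delta(x,y)$.

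I do not anticipate a genuine obstacle here. For this particular identity neither the symmetry of $\delta$ nor the full biderivation axioms are actually invoked; they belong to the surrounding context and to Lemma \ref{le:2.2}, which already guarantees that $\delta$ survives as a biderivation on $A^{+}$. The only step that demands any care at all is the verification that the Jordan and associative products coincide, and this is precisely where commutativity enters.
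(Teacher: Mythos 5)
Your proof is correct, and it is a mild but genuine streamlining of the paper's argument rather than a verbatim match. The paper does not collapse the Jordan product: it writes $\delta(x \circ y, z) = \tfrac{1}{2}\delta(xy,z) + \tfrac{1}{2}\delta(yx,z)$ by bilinearity, applies the hypothesis to each term to get $\tfrac{1}{2}z\delta(x,y) + \tfrac{1}{2}z\delta(y,x)$, and then needs \emph{both} the symmetry of $\delta$ and the commutativity of $A$ to recombine this into $z \circ \delta(x,y)$. You instead use commutativity once, at the outset, to observe that $a \circ b = ab$ on $A^{+}$, after which a single application of the hypothesis finishes the proof; as you correctly note, this shows the symmetry of $\delta$ (invoked in the paper's computation) is not actually needed for this particular identity. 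Both arguments are one-line computations of the same kind, but yours isolates more cleanly where commutativity enters and trims an unnecessary hypothesis from the reasoning.
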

\begin{proof}
For all $x , y , z \in A^{+}$,
\begin{align*}
&\delta(x \circ y , z) = \delta(\frac{1}{2}xy + \frac{1}{2}yx , z) = \frac{1}{2}\delta(xy , z) + \frac{1}{2}\delta(yx , z)\\
&= \frac{1}{2}z\delta(x , y) + \frac{1}{2}z\delta(y , x) = \frac{1}{2}z\delta(x , y) + \frac{1}{2}\delta(x , y)z = z \circ \delta(x , y).
\end{align*}
\end{proof}
\begin{thm}\label{thm:2.4}
Let $J$ be a perfect Jordan algebra over $\rm{F}$ and $M$ a $J$-module satisfying $Z_{M}(J) = \{0\}$. Suppose that $\delta : J \times J \rightarrow M$ is a symmetric biderivation and $\gamma $ belongs to $Cent(M)$. Then the following equations are equivalent.
\begin{equation}\label{eq:1}
\delta(w , u \circ v) = w \cdot \delta(u , v), \quad\forall u,v,w \in J.\tag{1}
\end{equation}
\begin{equation}\label{eq:2}
z \cdot \gamma(x \circ y) = x \cdot \gamma(y \circ z) + y \cdot \gamma(x \circ z), \quad\forall x , y , z \in J.\tag{2}
\end{equation}
\end{thm}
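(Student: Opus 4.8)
The bridge between the two identities is the representation $\delta(x,y)=\gamma(x\circ y)$, so my plan is to reduce everything to products $x\circ y$ (legitimate since $J$ is perfect, hence spanned by such products) and to use the hypothesis $Z_M(J)=\{0\}$ as a well-definedness / injectivity device. Once the correspondence $\delta(x,y)\leftrightarrow\gamma(x\circ y)$ is in place, each implication becomes a short manipulation of the biderivation axiom (ii) together with the symmetry of $\delta$ and the commutativity of $\circ$.

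For $(1)\Rightarrow(2)$ I would first produce $\gamma$. Define it on products by $\gamma(x\circ y):=\delta(x,y)$ and extend linearly; perfectness guarantees $\gamma$ is then defined on all of $J$. The crux is well-definedness: if $\sum_i x_i\circ y_i=0$, then for every $w\in J$,
\[
w\cdot\Big(\sum_i\delta(x_i,y_i)\Big)=\sum_i\delta(w,x_i\circ y_i)=\delta\Big(w,\sum_i x_i\circ y_i\Big)=0,
\]
where the first equality is exactly $(1)$; since this holds for all $w$, the element $\sum_i\delta(x_i,y_i)$ lies in $Z_M(J)=\{0\}$ and hence vanishes. The same computation (writing $y=\sum_j a_j\circ b_j$ and using $(1)$) shows $\gamma\in Cent(M)$. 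With $\gamma$ in hand I would verify $(2)$ directly:
\[
z\cdot\gamma(x\circ y)=z\cdot\delta(x,y)\overset{(1)}{=}\delta(z,x\circ y)\overset{(ii)}{=}x\cdot\delta(z,y)+y\cdot\delta(z,x)=x\cdot\gamma(y\circ z)+y\cdot\gamma(x\circ z),
\]
the last step using $\delta(z,y)=\gamma(z\circ y)=\gamma(y\circ z)$ and likewise for the other term.

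For $(2)\Rightarrow(1)$ I would run this chain backwards. Expanding the left side of $(1)$ by axiom (ii) and rewriting through $\delta(a,b)=\gamma(a\circ b)$ gives
\[
\delta(w,u\circ v)=u\cdot\delta(w,v)+v\cdot\delta(w,u)=u\cdot\gamma(v\circ w)+v\cdot\gamma(u\circ w),
\]
and the desired right side is precisely $w\cdot\gamma(u\circ v)=w\cdot\delta(u,v)$ once $(2)$ is applied with $(x,y,z)=(u,v,w)$. Thus $(2)$ is exactly the identity needed to close the loop, and it is genuinely used here.

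I expect the main obstacle to be the first half of the forward direction, namely that $\gamma$ is well defined: that a vanishing combination of products forces the matching combination of $\delta$-values to vanish. This is where both hypotheses are indispensable — perfectness so that $\gamma$ is defined on all of $J$, and $Z_M(J)=\{0\}$ to upgrade ``annihilated by every $w$'' into ``equal to zero.'' The only other point requiring care is the bookkeeping of the symmetry of $\delta$ and the commutativity of $\circ$ so that the argument orders in $(2)$ line up; in particular I would be careful to expand $\delta(z,x\circ y)$ through axiom (ii) rather than by reapplying $(1)$, since the latter route conflates the two identities rather than relating them.
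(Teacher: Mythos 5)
Your forward direction is correct and is essentially the paper's own argument: define $\gamma$ on products by $\gamma(x \circ y) := \delta(x , y)$, use perfectness so that $\gamma$ is defined on all of $J$, use (\ref{eq:1}) together with $Z_{M}(J) = \{0\}$ for well-definedness and for membership in $Cent(M)$, and then verify (\ref{eq:2}) by the same short computation the paper gives.

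The backward direction, however, has a genuine gap. In the theorem, $\delta$ and $\gamma$ are not given as related maps, so the implication $(\ref{eq:2}) \Rightarrow (\ref{eq:1})$ must be read existentially: given $\gamma \in Cent(M)$ satisfying (\ref{eq:2}), one must \emph{produce} a symmetric biderivation $\delta$ satisfying (\ref{eq:1}); the only candidate is $\delta(x , y) := \gamma(x \circ y)$. Your chain uses two things simultaneously: the biderivation axiom (ii) for $\delta$ and the rewriting rule $\delta(a , b) = \gamma(a \circ b)$. If $\delta$ is the given biderivation of the theorem, the rewriting rule is unjustified; if $\delta$ is the constructed map, then axiom (ii) is not given but must be proved --- and proving it is exactly where (\ref{eq:2}) enters, since for this $\delta$ axiom (ii) is equivalent to (\ref{eq:2}) via the centroid property. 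Note also that once $\delta := \gamma(\cdot \circ \cdot)$ is defined, condition (\ref{eq:1}) is automatic and needs no appeal to (\ref{eq:2}) at all: $\delta(w , u \circ v) = \gamma(w \circ (u \circ v)) = w \cdot \gamma(u \circ v) = w \cdot \delta(u , v)$, using only $\gamma \in Cent(M)$. So your closing claim that (\ref{eq:2}) ``is genuinely used here'' misplaces its role: the actual content of this direction is the verification that $\gamma(\cdot \circ \cdot)$ is a biderivation, which is what the paper does, namely $\delta(x \circ y , z) = \gamma(z \circ (x \circ y)) = z \cdot \gamma(x \circ y) = x \cdot \gamma(y \circ z) + y \cdot \gamma(x \circ z) = x \cdot \delta(y , z) + y \cdot \delta(x , z)$, and which your proposal omits entirely.
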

\begin{proof}
First, suppose that $\delta$ is a symmetric biderivation satisfying (\ref{eq:1}). Define $\gamma : J = J^{'} \rightarrow M$ to be a linear map by
\[\gamma(x \circ y) = \delta(x , y),\quad \forall x , y \in J.\]
Suppose $\sum_{i}x_{i} \circ y_{i} = 0$. Then we have
\[0 = \delta\left(u , \sum_{i}x_{i} \circ y_{i}\right) = \sum_{i}\delta(u , x_{i} \circ y_{i}) = \sum_{i}u \cdot \delta(x_{i} , y_{i}) = u \cdot \left(\sum_{i}\delta(x_{i} , y_{i})\right).\]
Since $Z_{M}(J) = \{0\}$, $\sum_{i}\delta(x_{i} , y_{i}) = 0$. Hence, $\gamma$ is well-defined.

For all $u , v \in J$, suppose $v = \sum_{i}x_{i} \circ y_{i}$. Then we have
\begin{align*}
&\delta(u , v) = \delta\left(u , \sum_{i}x_{i} \circ y_{i}\right) = \sum_{i}\delta(u , x_{i} \circ y_{i}) = \sum_{i}u \cdot \delta(x_{i} , y_{i}) = \sum_{i}u \cdot \gamma(x_{i} \circ y_{i})\\
&= u \cdot \left(\sum_{i}\gamma(x_{i} \circ y_{i})\right) = u \cdot \gamma\left(\sum_{i}x_{i} \circ y_{i}\right) = u \cdot \gamma(v).
\end{align*}
Then we have $\gamma(x \circ y) = \delta(x , y) = x \cdot \gamma(y)$, which implies that $\gamma \in Cent(M)$.

For all $x , y , z \in J$,
\begin{align*}
&z \cdot \gamma(x \circ y) = z \cdot \delta(x , y) = \delta(x \circ y , z) = x \cdot \delta(y , z) +  y \cdot \delta(x , z) = x \cdot \gamma(y \circ z) + y \cdot \gamma(x \circ z),
\end{align*}
which implies that $\gamma$ satisfies (\ref{eq:2}).

Now suppose that $\gamma$ belongs to $Cent(M)$ satisfying (\ref{eq:2}). Define $\delta : J \times J \rightarrow M$ to be a bilinear map by
\[\delta(x , y) = \gamma(x \circ y),\quad\forall x , y \in J.\]
For all $x , y , z \in J$,
\begin{align*}
&\delta(x \circ y , z) = \gamma((x \circ y) \circ z) = \gamma(z \circ (x \circ y)) = z \cdot \gamma(x \circ y)\\
&= x \cdot \gamma(y \circ z) + y \cdot \gamma(x \circ z) = x \cdot \delta(y , z) + y \cdot \delta(x , z).
\end{align*}
Hence, $\delta$ is a symmetric biderivation satisfying (\ref{eq:1}).
\end{proof}
\begin{re}
The condition (\ref{eq:1}) is not constant which can be seen from the following examples.
\end{re}
\begin{ex}
Let $A$ be a associative algebra with basis $\{e_{1}, e_{2}, e_{3}\}$ and $A^{+}$ the special Jordan algebra generated by $A$. And the multiplication table is
\[e_{1}e_{1} = e_{1},\; e_{2}e_{2} = e_{2},\; e_{3}e_{2} = e_{2}e_{3} = e_{3},\; e_{i}e_{j} = 0,(others).\]
Obviously, $A$ is commutative and perfect. So is $A^{+}$. Moreover, $Z(A^{+}) = \{0\}$. Suppose $D$ is a derivation on $A$. According to Example \ref{ex:1.1}, $\delta$ is a symmetric biderivation on $A^{+}$ defined by $\delta(x, y) = D(x)D(y)$. From Corollary \ref{cor:2.3}, we have that $\delta$ satisfies (\ref{eq:1}) when $\delta$ satisfies \[\delta(xy , z) = z\delta(x , y), \quad\forall x , y , z \in A.\]
That is to say $\delta$ satisfies (\ref{eq:1}) when $D$ satisfies
\[D(xy)D(z) = zD(x)D(y),\quad\forall x, y, z \in A.\]
One can verify that all derivations on $A$ satisfy the above equation. Hence, $\delta$ is a symmetric biderivation satisfying (\ref{eq:1}) defined by $\delta(x, y) = D(x)D(y)$ where $D$ is a derivation on $A$.
\end{ex}
\begin{ex}
Suppose that $J$ is a Jordan algebra with unit $1$ and satisfies $Z(J) = \{0\}$. It's obvious that $J$ is perfect. For any non trivial symmetric biderivation $\delta$ on $J$, we have
\[\delta(1 \circ 1, u) = 1 \circ \delta(1, u) + 1 \circ \delta(1, u),\quad\forall u \in J,\]
which implies that
\[\delta(1, u) = 0,\quad\forall u \in J.\]
If $\delta$ satisfies (\ref{eq:1}), then we have
\[\delta(u \circ v, 1) = 1 \circ \delta(u, v)\quad\forall u, v \in J,\]
which implies that
\[\delta(u, v) = 0,\quad\forall u, v \in J.\]
That is to say $\delta$ is the zero biderivation. Contradiction.
\end{ex}

We now record a corollary to Theorem \ref{thm:2.4}.
\begin{cor}\label{cor:2.8}
If $J$ is perfect and satisfies $Z(J) = \{0\}$, then every symmetric biderivation $\delta$ on $J$ satisfying (\ref{eq:1}) is of the form $\delta(x , y) = \gamma(x \circ y)$, where $\gamma \in  Cent(J)$ such that (\ref{eq:2}) holds.
\end{cor}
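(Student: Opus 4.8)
The plan is to recognize Corollary \ref{cor:2.8} as the instance $M = J$ of Theorem \ref{thm:2.4}, where $J$ is regarded as a module over itself. First I would equip the vector space $J$ with the $J$-module structure given by the regular (adjoint) action $a \cdot x = a \circ x$. The symmetry axiom $a \cdot x = x \cdot a$ is immediate from the commutativity of $\circ$, and the two remaining module identities are precisely the operator identities satisfied by the multiplication maps $L_a : x \mapsto a \circ x$; these are a standard consequence of the Jordan identity, so $J$ is indeed a $J$-module.

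With this choice, the hypothesis on the annihilator translates directly: by definition $Z_M(J) = \{v \in J \mid J \cdot v = 0\} = Z_J(J) = Z(J)$, which is $\{0\}$ by assumption, and $J$ is perfect by assumption. Thus all hypotheses of Theorem \ref{thm:2.4} are in force for this particular $M$.

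Next I would reconcile the two notions of centroid. For the regular module the membership condition $\gamma \in Cent(M)$ reads $\gamma(x \circ y) = x \circ \gamma(y)$, whereas the algebra centroid $Cent(J)$ requires the two-sided identity $\gamma(x \circ y) = x \circ \gamma(y) = \gamma(x) \circ y$. The point to be careful about, and really the only content beyond invoking the theorem, is that for a commutative product these coincide: interchanging $x$ and $y$ in the one-sided identity and using $x \circ y = y \circ x$ yields $\gamma(x) \circ y = y \circ \gamma(x) = \gamma(y \circ x) = \gamma(x \circ y)$, so the missing half comes for free. Hence $Cent(M) = Cent(J)$ as sets of linear maps.

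Finally I would apply Theorem \ref{thm:2.4}. Given a symmetric biderivation $\delta$ on $J$ satisfying (\ref{eq:1}), the forward construction in that proof produces a well-defined linear map $\gamma$ with $\delta(x , y) = \gamma(x \circ y)$, lying in $Cent(M) = Cent(J)$ and satisfying (\ref{eq:2}). This is exactly the assertion of the corollary, so no further computation is required; the entire argument reduces to the specialization $M = J$ together with the commutativity bridge between the module centroid and the algebra centroid.
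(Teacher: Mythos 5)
Your proposal is correct and is exactly the argument the paper intends: the corollary is recorded as an immediate specialization of Theorem \ref{thm:2.4} to $M = J$ with the regular action $a \cdot x = a \circ x$, and you invoke precisely that forward direction. The two details you supply beyond the bare citation --- verifying the regular module axioms and showing that commutativity makes the one-sided condition $\gamma(x \circ y) = x \circ \gamma(y)$ equivalent to the two-sided membership in $Cent(J)$ --- are correct and are the right points to check.
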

\begin{re}
Let $J$ be a Jordan algebra. It's obvious that $\delta : J \times J \rightarrow Z(J)$ with $\delta(J , J^{'}) = 0$ is a symmetric biderivation on $J$. And we name it trivial biderivation.
\end{re}
Suppose that $\delta : J \times J \rightarrow J$ is an arbitrary symmetric biderivation, for any $x , y \in J, z \in Z(J)$, we have
\[0 = \delta(z \circ x , y) = z \circ \delta(x , y) + x \circ \delta(z , y) = x \circ \delta(z , y),\]
hence $\delta(Z(J) , J) \subseteq Z(J)$.

As a result, setting $\bar{J} = J / Z(J)$, we can define a symmetric biderivation $\bar{\delta} : \bar{J} \times \bar{J} \rightarrow \bar{J}$ by
\[\bar{\delta}(\bar{x} , \bar{y}) = \overline{\delta(x , y)},\]
where $\bar{x} = x + Z(J) \in \bar{J}$ for $x \in J$.
\begin{lem}\label{le:2.10}
Let $J$ be a Jordan algebra. Up to trivial biderivations on $J$, the map $\delta \rightarrow \bar{\delta}$ is a $1$-$1$ map from symmetric biderivations satisfying (\ref{eq:1}) on $J$ to symmetric biderivations satisfying (\ref{eq:1}) on $\bar{J}$.
\end{lem}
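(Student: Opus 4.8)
The plan is to establish three things in turn: that $\delta \mapsto \bar\delta$ is well defined, that it actually lands among symmetric biderivations satisfying (\ref{eq:1}) on $\bar J$, and that its kernel is exactly the space of trivial biderivations, so that the induced map on the quotient is injective — which is the asserted $1$-$1$ correspondence up to trivial biderivations. For well-definedness I would invoke the observation recorded just before the lemma, namely $\delta(Z(J),J)\subseteq Z(J)$ for every symmetric biderivation $\delta$; by symmetry this also gives $\delta(x,z)=\delta(z,x)\in Z(J)$ for $z\in Z(J)$, so $\delta(J,Z(J))\subseteq Z(J)$. Hence if $x\equiv x'$ and $y\equiv y'\pmod{Z(J)}$, bilinearity forces $\delta(x,y)\equiv\delta(x',y')\pmod{Z(J)}$, and $\bar\delta(\bar x,\bar y)=\overline{\delta(x,y)}$ is independent of the chosen representatives. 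I would also note that $\delta\mapsto\bar\delta$ is $\mathrm{F}$-linear, which is what lets me phrase injectivity as a statement about a kernel, and that every trivial biderivation (image in $Z(J)$, vanishing on $J\times J'$) does satisfy (\ref{eq:1}), so that these objects genuinely lie in the domain and ``up to trivial biderivations'' is meaningful.

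Next I would push the defining identities through the quotient projection $\pi:J\to\bar J$. Since $\pi(x\circ y)=\bar x\circ\bar y$ and $\pi(x\circ m)=\bar x\cdot\bar m$, where the $\bar J$-module action on $\bar J$ is the induced multiplication, applying $\pi$ to biderivation axioms (i) and (ii), to the symmetry relation $\delta(x,y)=\delta(y,x)$, and to condition (\ref{eq:1}) for $\delta$ yields the corresponding identities for $\bar\delta$. Each of these is a routine one-line verification, and together they show that the codomain is correct, i.e. $\bar\delta$ is a symmetric biderivation satisfying (\ref{eq:1}) on $\bar J$.

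The crux, and the single step where condition (\ref{eq:1}) is genuinely needed, is the kernel computation. Suppose $\bar\delta=0$, that is $\delta(x,y)\in Z(J)$ for all $x,y\in J$. Then for all $u,v,w\in J$, identity (\ref{eq:1}) gives $\delta(w,u\circ v)=w\circ\delta(u,v)=0$, because $\delta(u,v)\in Z(J)=Z_J(J)$ annihilates $J$ under the action $\circ$. As $J'$ is spanned by elements of the form $u\circ v$, this forces $\delta(J,J')=0$; combined with $\delta(J,J)\subseteq Z(J)$ this says precisely that $\delta$ is a trivial biderivation. Conversely, every trivial biderivation has image in $Z(J)$ and hence $\bar\delta=0$. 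Therefore the kernel of $\delta\mapsto\bar\delta$ is exactly the space of trivial biderivations, so the induced map on the quotient is injective, proving the lemma. I expect the only real subtlety to be bookkeeping rather than mathematics: being careful to use symmetry so that well-definedness covers both slots, and making sure that in the kernel step one really invokes (\ref{eq:1}) and not merely the biderivation axioms, since the axioms alone would not deliver $\delta(J,J')=0$.
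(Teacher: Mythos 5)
Your proof is correct and its core argument is the same as the paper's: the paper also reduces injectivity to showing that a biderivation with $\bar{\delta}=0$ (there, $\delta=\delta_1-\delta_2$) has $\delta(J,J)\subseteq Z(J)$ and then uses condition (\ref{eq:1}) to get $\delta(J,J')=J\circ\delta(J,J)=0$, hence is trivial. The extra steps you include (well-definedness of $\bar{\delta}$ and that it satisfies the axioms on $\bar{J}$) are handled in the paper by the discussion immediately preceding the lemma, so there is no substantive difference in approach.
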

\begin{proof}
Let $\delta_{1} , \delta_{2}$ be symmetric biderivations on $J$ such that $\bar{\delta_{1}} = \bar{\delta_{2}}$. Then $\delta = \delta_{1} - \delta_{2}$ is a symmetric biderivation on $J$. Since $\bar{\delta_{1}} = \bar{\delta_{2}}$, we have
\[\bar{\delta_{1}}(\bar{J}) = \bar{\delta_{2}}(\bar{J}).\]
Hence
\[\overline{\delta_{1}(J) - \delta_{2}(J)} = \overline{\delta_{1}(J)} - \overline{\delta_{2}(J)} = \bar{\delta_{1}}(\bar{J}) - \bar{\delta_{2}}(\bar{J}) = 0,\]
which implies
\[\delta_{1}(J) - \delta_{2}(J) \in Z(J),\]
i.e., $\delta (J , J) \subseteq Z(J)$.

Moreover,we have
\[\delta(J , J^{'}) = \delta(J , J \circ J) = J \circ \delta(J , J) = 0.\]
Thus, $\delta$ is a trivial biderivation on $J$.
\end{proof}
\begin{defn}
Let $J$ be a Jordan algebra. A special biderivation is a symmetric biderivation $\delta : J \times J \rightarrow J$ such that
\begin{enumerate}[(i)]
\item $\delta(J^{'} , J^{'}) = 0$,
\item $\delta(J , J^{'}) \subseteq Z_{J}(J^{'})$.
\end{enumerate}
\end{defn}
Every symmetric biderivation $\delta : J \times J \rightarrow J$ satisfies
\begin{equation}\label{eq:3}
\delta(u , x \circ y) = x \circ \delta(u , y) + y \circ \delta(u , x) \in J^{'},\quad \forall u , x , y \in J . \tag{3}
\end{equation}
Thus we have a symmetric biderivation $\delta^{'} : J^{'} \times J^{'} \rightarrow J^{'}$ by restricting $\delta$ to $J^{'} \times J^{'}$.
\begin{lem}\label{le:2.12}
Let $J$ be a Jordan algebra and satisfies $Z(J) = \{0\}$.
\begin{enumerate}[(i)]
\item Up to a special biderivation, every symmetric biderivation $\delta$ satisfying (\ref{eq:1})  on $J$ is an extension of a unique symmetric biderivation  satisfying (\ref{eq:1}) on $J^{'}$.
\item If $J^{'}$ is further perfect, then $J$ has no nonzero special biderivations.
\end{enumerate}
\end{lem}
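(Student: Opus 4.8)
The plan is to treat the two parts separately: part (i) rests on a one-line computation with (\ref{eq:1}), while part (ii) combines the perfectness of $J'$ with a small $S_3$-symmetry argument.

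For part (i), I would first record that every symmetric biderivation $\delta:J\times J\to J$ restricts to a symmetric biderivation $\delta':J'\times J'\to J'$: by (\ref{eq:3}) we have $\delta(J,J')\subseteq J'$, hence $\delta(J',J')\subseteq J'$, and the biderivation axioms together with (\ref{eq:1}) are inherited because all products and all relevant values of $\delta$ stay inside the subalgebra $J'$. The actual content is the uniqueness ``up to a special biderivation.'' Given two symmetric biderivations $\delta_1,\delta_2$ on $J$ satisfying (\ref{eq:1}) with $\delta_1'=\delta_2'$, I must show that $\delta:=\delta_1-\delta_2$ is special. Condition (i) of specialness holds at once, since $\delta(J',J')=0$. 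For condition (ii) I would exploit (\ref{eq:1}) in the form $\delta(b,u\circ v)=b\circ\delta(u,v)$: taking $b\in J'$ and noting $u\circ v\in J'$, the left-hand side is $\delta$ evaluated on two elements of $J'$, hence vanishes, so $b\circ\delta(u,v)=0$ for all $b\in J'$ and all $u,v\in J$. This yields even the stronger statement $\delta(J,J)\subseteq Z_J(J')$, which in particular gives $\delta(J,J')\subseteq Z_J(J')$; thus $\delta$ is special. Conversely a special biderivation restricts to $0$ on $J'$, so the restriction map is injective modulo special biderivations, which is exactly the asserted correspondence.

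For part (ii), assume in addition that $J'$ is perfect and let $\delta$ be a special biderivation. The first step is to upgrade condition (ii) of specialness to the equality $\delta(J,J')=0$. For $a\in J'$, writing $a=\sum_i b_i\circ c_i$ with $b_i,c_i\in J'$ (using $J'=J'\circ J'$) and expanding by the biderivation rule gives $\delta(u,a)=\sum_i\big(b_i\circ\delta(u,c_i)+c_i\circ\delta(u,b_i)\big)$; since $\delta(u,c_i),\delta(u,b_i)\in\delta(J,J')\subseteq Z_J(J')$ and $b_i,c_i\in J'$, every term lies in $J'\circ Z_J(J')=0$, so $\delta(J,J')=0$. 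Consequently $\delta(a\circ b,z)=0$ for all $a,b,z\in J$ (as $a\circ b\in J'$), and expanding the left-hand side by the biderivation axiom yields the key relation
\[ a\circ\delta(b,z)+b\circ\delta(a,z)=0,\qquad\forall\, a,b,z\in J. \]

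It remains to deduce $\delta=0$. Setting $T(a,b,z)=a\circ\delta(b,z)$, the displayed relation says $T$ is antisymmetric under transposing its first two arguments, while the symmetry $\delta(b,z)=\delta(z,b)$ says $T$ is symmetric under transposing its last two arguments. Since these two (adjacent, conjugate) transpositions generate $S_3$, a short chain of rewrites alternating the two symmetries forces $T(a,b,z)=-T(a,b,z)$, and as the characteristic is not $2$ we get $T\equiv 0$. Then $a\circ\delta(b,z)=0$ for all $a$, i.e.\ $\delta(b,z)\in Z_J(J)=Z(J)=\{0\}$, so $\delta=0$. The step I expect to be the main obstacle is arranging part (ii) so that perfectness of $J'$ is invoked precisely where it is needed — to collapse $\delta(J,J')$ to zero — and then carrying out the $S_3$-symmetry argument cleanly; the remainder is routine bookkeeping with the biderivation axioms and the annihilator identity $J'\circ Z_J(J')=0$.
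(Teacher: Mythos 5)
Your proof is correct, and its skeleton is the same as the paper's: in (i) the difference $\delta=\delta_1-\delta_2$ is shown to be special, and in (ii) perfectness of $J'$ is used to collapse $\delta(J,J')\subseteq Z_J(J')$ to $\delta(J,J')=0$, after which a symmetry argument in characteristic $\neq 2$ kills $\delta$. Two differences are worth noting. In part (i) the paper never invokes (\ref{eq:1}): taking $u,y\in J'$ in the unconditional identity (\ref{eq:3}) already gives $y\circ\delta(u,x)=0$, so the paper's part (i) works for arbitrary symmetric biderivations, whereas your argument routes through (\ref{eq:1}) (legitimately, since (\ref{eq:1}) passes to the difference by linearity) and obtains the stronger inclusion $\delta(J,J)\subseteq Z_J(J')$. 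In part (ii) your first step supplies exactly the justification the paper needs but states carelessly: the paper writes ``since $J$ is perfect and $Z(J)=\{0\}$'' even though the hypothesis is that $J'$ is perfect, and your expansion $\delta\bigl(u,\sum_i b_i\circ c_i\bigr)=\sum_i\bigl(b_i\circ\delta(u,c_i)+c_i\circ\delta(u,b_i)\bigr)\in J'\circ Z_J(J')=\{0\}$ is the correct argument. Finally, your $S_3$ rewriting (the trilinear map $T(a,b,z)=a\circ\delta(b,z)$ is antisymmetric in the first two slots and symmetric in the last two, hence $2T=0$ and $T\equiv 0$, so $\delta(b,z)\in Z(J)=\{0\}$) is the coordinate-free form of the paper's concluding contradiction, which instantiates the same identities at witnesses $x_1,x_2,x_3$ with $x_3\circ\delta(x_1,x_2)=z\neq 0$ and deduces $z=-z$; both versions are complete, and yours avoids having to choose witnesses.
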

\begin{proof}
(i) Let $\delta_{1} , \delta_{2}$ be biderivations on $J$ satisfying $\delta_{1}^{'} = \delta_{2}^{'}$. Set $\delta = \delta_{1} - \delta_{2}$. Then
\[\delta(J^{'} , J^{'}) = (\delta_{1} - \delta_{2})(J^{'} , J^{'}) = (\delta_{1}^{'} - \delta_{2}^{'})(J^{'} , J^{'}) = 0.\]
Take $u , y \in J^{'}$ in (\ref{eq:3}) , then we have
\[y \circ \delta(u , x) = 0, \quad \forall x \in J , y , u \in J^{'},\]
i.e., $\delta(J , J^{'}) \subseteq Z_{J}(J^{'})$.

Hence, $\delta$ is a special biderivation on $J$.

(ii) Let $\delta$ be a special biderivation on $J$. From the proof of (i), we have
\[\delta(J , J^{'}) \subseteq Z_{J}(J^{'}).\]
Since $J$ is perfect and satisfies $Z(J) = \{0\}$, we have $\delta(J , J^{'}) = 0$.

Assuming that $\delta \neq 0$, there are $x_{1} , x_{2} \in J$ such that
\[\delta(x_{1} , x_{2}) = z_{12} \neq 0.\]
Since $Z(J) = \{0\}$, we can find $x_{3} \in J$ such that
\[x_{3} \circ z_{12} = z \neq 0.\]
Let
\[\delta(x_{1} , x_{3}) = z_{13} , \delta(x_{2} , x_{3}) = z_{23},\]
then we have
\[0 = \delta(x_{1} \circ x_{3} , x_{2}) = x_{1} \circ \delta(x_{3} , x_{2}) + x_{3} \circ \delta(x_{1} , x_{2}) = x_{1} \circ z_{23} + z,\]
\[0 = \delta(x_{1} \circ x_{2} , x_{3}) = x_{1} \circ \delta(x_{2} , x_{3}) + x_{2} \circ \delta(x_{1} , x_{3}) = x_{1} \circ z_{23} + x_{2} \circ z_{13},\]
\[0 = \delta(x_{2} \circ x_{3} , x_{1}) = x_{2} \circ \delta(x_{3} , x_{1}) + x_{3} \circ \delta(x_{2} , x_{1}) = x_{2} \circ z_{13} + z.\]
We deduce that
\[z = - x_{1} \circ z_{23} = x_{2} \circ z_{13} = -z,\]
which is a contradiction. Hence, $\delta = 0$.
\end{proof}

Let us explain an algorithm for finding all symmetric biderivations satisfying (\ref{eq:1}) on any Jordan algebra $J$ using Lemma \ref{le:2.10} and \ref{le:2.12}. Note that we have a sequence of quotient Jordan algebras:
\begin{equation}\label{eq:4}
J_{(1)} = J , J_{(2)} = J_{(1)} / Z(J_{(1)}) , \cdots , J_{(r+1)} = J_{(r)} / Z(J_{(r)}).\tag{4}
\end{equation}
If there is $r \in N$ such that $Z(J_{(r)}) = 0$, then repeatedly applying Lemma \ref{le:2.10} to the above sequence backward, we reduce the problem of finding symmetric biderivations on $J$ to the problem of finding symmetric biderivations on the Jordan algebras $J_{(r+1)}$. If $J_{(r+1)}$ is also perfect, then using Corollary \ref{cor:2.8} we have all biderivations on $J_{(r+1)}$. If $J_{(r+1)}$ is not perfect, using Lemma \ref{le:2.12} we reduce the problem of finding symmetric biderivations on $J_{(r+1)}$ to the problem of finding symmetric biderivations on the Jordan algebra $J^{'}_{r+1}$. Now we repeat the procedure based on (\ref{eq:4}) with $J$ replaced by $J^{'}_{r+1}$, and continue this algorithm.

Now we give an example to apply the above algorithm.
\begin{ex}
Let $J$ be a Jordan algebra with basis $\{x_{1}, x_{2}, x_{3}\}$. And the multiplication table is
\[x_{1} \circ x_{i} = x_{i} \circ x_{1} = 0,\;1 \leq i \leq 3;\quad x_{i} \circ x_{j} = x_{j} \circ x_{i} = \sum^{3}_{k = 1}x_{k},\;2 \leq i , j \leq 3 .\]
Obviously, $Z(J) = \{x_{1}\}$.
Then according to (\ref{eq:4}), we have $J_{1} = J/Z(J) = \{\overline{x_{2}} , \overline{x_{3}}\}$. Obviously, $J_{1}$ is perfect and $Z(J_{1}) = 0$, using Corollary \ref{cor:2.8} we have all biderivations satisfying (\ref{eq:1}) on $J_{1}$ , i.e., they are in the form of $\delta(x , y) = \gamma(x \circ y)$, where $\gamma \in Cent (J_{1})$ and satisfies (\ref{eq:2}).
\end{ex}

\section{Triple Homomorphisms on Jordan algebras}\label{se:2}

Suppose that $f$ is a triple homomorphism from $J_{1}$ to $J_{2}$ where $J_{1}$ and $J_{2}$ are Jordan algebras. Denote $Ann_{f}(J_{2})$ the set
\[\{a \in J_{2} | a \circ f(x) = 0,\quad\forall x \in J_{1}\}.\]
\begin{lem}\label{le:3.1}
Suppose $J_{1}$ is a perfect Jordan algebra over $\rm{F}$and $f$ is a triple homomorphism from $J_{1}$ to $J_{2}$ satisfying $Ann_{f}(J_{2}) = \{0\}$ where $J_{2}$ is an arbitrary Jordan algebra over $\rm{F}$. There exists an $\rm{F}$-linear mapping $\delta_{f} : J_{1} \rightarrow J_{2}$ such that for all $x \in J_{1}$ with $x = \sum_{i \in I}(x_{1i} \circ x_{2i})(x_{1i}, x_{2i} \in J_{1})$, $\delta_{f}(x) = \sum_{i \in I}(f(x_{1i}) \circ f(x_{2i}))$.
\end{lem}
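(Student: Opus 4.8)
The plan is to define $\delta_f$ directly by the stated formula and to show that the entire content of the lemma reduces to a single well-definedness statement. Since $J_1$ is perfect, $J_1 = J_1^{'}$, so every $x \in J_1$ does admit at least one representation $x = \sum_{i \in I} x_{1i} \circ x_{2i}$; the only thing that can go wrong is dependence on the chosen representation. Moreover, the assignment $\sum_i x_{1i} \circ x_{2i} \mapsto \sum_i f(x_{1i}) \circ f(x_{2i})$ is visibly additive, and it is $\rm{F}$-homogeneous because $f$ is linear and $\lambda x = \sum_i (\lambda x_{1i}) \circ x_{2i}$; hence, once well-definedness is established, linearity of $\delta_f$ is automatic. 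So the whole proof comes down to proving the implication: if $\sum_i x_{1i} \circ x_{2i} = 0$, then $\sum_i f(x_{1i}) \circ f(x_{2i}) = 0$.

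To establish this implication I would exploit the hypothesis $Ann_f(J_2) = \{0\}$. Writing $a = \sum_i f(x_{1i}) \circ f(x_{2i}) \in J_2$, the definition of $Ann_f(J_2)$ means that to conclude $a = 0$ it suffices to verify $a \circ f(w) = 0$ for every $w \in J_1$. Thus the annihilator condition lets me replace the target equality by a family of ``test'' equalities indexed by $w$.

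The key step is then to compute $a \circ f(w)$ using the triple homomorphism identity $f\bigl((p \circ q) \circ r\bigr) = \bigl(f(p) \circ f(q)\bigr) \circ f(r)$, which converts each summand into the image of a product and lets the defining relation be applied inside $f$:
\begin{align*}
a \circ f(w) &= \sum_i \bigl(f(x_{1i}) \circ f(x_{2i})\bigr) \circ f(w) = \sum_i f\bigl((x_{1i} \circ x_{2i}) \circ w\bigr)\\
&= f\Bigl(\Bigl(\sum_i x_{1i} \circ x_{2i}\Bigr) \circ w\Bigr) = f(0 \circ w) = 0,
\end{align*}
where the third equality uses bilinearity of $\circ$ together with linearity of $f$, and the fourth uses $\sum_i x_{1i} \circ x_{2i} = 0$. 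Since this holds for all $w \in J_1$, we get $a \in Ann_f(J_2) = \{0\}$, so $a = 0$, which is exactly the required well-definedness.

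The main obstacle — indeed the only nontrivial point — is this well-definedness, and it rests entirely on having the annihilator condition available to probe the candidate value against all of $f(J_1)$; the triple identity is precisely the device that turns such a probe into an expression to which the relation $\sum_i x_{1i} \circ x_{2i} = 0$ can be applied. Once well-definedness is in hand, additivity and $\rm{F}$-homogeneity require no further argument, so the construction of the linear map $\delta_f$ with the asserted defining property is complete.
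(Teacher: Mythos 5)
Your proof is correct and follows essentially the same route as the paper: both arguments reduce the lemma to well-definedness and establish it by testing the candidate value against $f(w)$ for all $w \in J_{1}$, using the triple homomorphism identity to rewrite $\bigl(f(x_{1i}) \circ f(x_{2i})\bigr) \circ f(w)$ as $f\bigl((x_{1i} \circ x_{2i}) \circ w\bigr)$ and then invoking $Ann_{f}(J_{2}) = \{0\}$. The only cosmetic difference is that you test a representation of zero while the paper compares two representations of the same element (and you spell out linearity and the role of perfectness, which the paper leaves implicit).
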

\begin{proof}
It's sufficient to prove that $\sum_{i \in I}(f(x_{1i}) \circ f(x_{2i}))$ is independent of the expression of $x$.

Suppose that
\[x = \sum_{i \in I}(x_{1i} \circ x_{2i}) = \sum_{j \in H}(x_{1j} \circ x_{2j}).\]
Let
\[\alpha = \sum_{i \in I}(f(x_{1i}) \circ f(x_{2i})), \beta = \sum_{j \in H}(f(x_{1j}) \circ f(x_{2j})).\]
For any $z \in J_{1}$, we have
\begin{align*}
&f(z) \circ (\alpha - \beta) = f(z) \circ \left(\sum_{i \in I}(f(x_{1i}) \circ f(x_{2i})) - \sum_{j \in H}(f(x_{1j}) \circ f(x_{2j}))\right)\\
&= f(z) \circ \left(\sum_{i \in I}(f(x_{1i}) \circ f(x_{2i}))\right) - f(z) \circ \left(\sum_{j \in H}(f(x_{1j}) \circ f(x_{2j}))\right)\\
&= \sum_{i \in I}(f(z) \circ (f(x_{1i}) \circ f(x_{2i}))) - \sum_{j \in H}(f(z) \circ (f(x_{1j}) \circ f(x_{2j})))\\
&= \sum_{i \in I}f(z \circ (x_{1i} \circ x_{2i})) - \sum_{j \in H}f(z \circ (x_{1j} \circ x_{2j}))\\
&= f\left(z \circ \left(\sum_{i \in I}(x_{1i} \circ x_{2i})\right)\right) - f\left(z \circ \left(\sum_{j \in H}(x_{1j} \circ x_{2j})\right)\right)\\
&= f(z \circ x) - f(z \circ x) = 0,
\end{align*}
we have $\alpha - \beta = 0$, i.e., $\alpha = \beta$. This completes the proof.
\end{proof}
\begin{ex}\label{ex:3.2}
Let $J$ be the Jordan algebra with basis $\{1, u_{1}, \cdots, u_{n}\}$ where $1$ is the unit, $u_{i}^{2} = \alpha_{i}1(\alpha_{i} \neq 0)(1 \leq i \leq n)$ and $u_{i} \circ u_{j} = 0(i \neq j;\;1 \leq i, j \leq n)$. Define $f : J \rightarrow J$ to be a linear map by
\begin{align*}
&f(1) = -1,\\
&f(u_{i}) = u_{i},(1 \leq i \leq n)
\end{align*}

One can verify that $f$ is a triple homomorphism from $J$ to $J$. Moreover, $f$ satisfies that $Ann_{f}(J) = \{0\}$.
\end{ex}
\begin{thm}\label{thm:3.3}
Suppose $J_{1}$ is a perfect Jordan algebra over $\rm{F}$and $f$ is a triple homomorphism from $J_{1}$ to $J_{2}$ satisfying $Ann_{f}(J_{2}) = \{0\}$ where $J_{2}$ is an arbitrary Jordan algebra over $\rm{F}$. Then $f(x^{2}) = (f(x))^{2}$ or $f(x^{2}) = -(f(x))^{2}$, $\forall x \in J_{1}$.
\end{thm}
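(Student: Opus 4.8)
The plan is to run everything through the auxiliary map $\delta_f$ of Lemma \ref{le:3.1}, which is defined on all of $J_1$ precisely because $J_1=J_1'$ is perfect. First I would record the two identities that drive the argument. Since $x\circ y$ is itself a product, the very definition of $\delta_f$ gives $f(x)\circ f(y)=\delta_f(x\circ y)$; and the computation already made in the proof of Lemma \ref{le:3.1} (expand $\delta_f$ over a product representation and apply the triple-homomorphism identity $(f(a)\circ f(b))\circ f(c)=f((a\circ b)\circ c)$) yields $f(z)\circ\delta_f(x)=f(z\circ x)$, and symmetrically $\delta_f(z)\circ f(x)=f(z\circ x)$, for all $x,y,z\in J_1$. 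Specialising these gives the two basic formulas
\[
(f(x))^2=f(x)\circ f(x)=\delta_f(x^2),\qquad f(x^2)=f(x)\circ\delta_f(x).
\]
So the theorem becomes the assertion that $\delta_f(x^2)$ and $f(x^2)$ agree up to a global sign.

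Next I would compute inside the commutative, power-associative subalgebra of $J_2$ generated by a single $f(x)$. The triple-homomorphism identity gives $(f(x))^3=(f(x)\circ f(x))\circ f(x)=f((x\circ x)\circ x)=f(x^3)$, and then, applying $f(a)\circ f(b)=\delta_f(a\circ b)$ to the outer product and power-associativity of Jordan algebras,
\[
(f(x))^2\circ(f(x))^2=(f(x))^3\circ f(x)=f(x^3)\circ f(x)=\delta_f(x^4)=f(x^2)\circ f(x^2).
\]
Because $\circ$ is commutative, $(A-B)\circ(A+B)=A\circ A-B\circ B$, and with $A=f(x^2)$, $B=(f(x))^2$ this rearranges to the key orthogonality relation
\[
\left(f(x^2)-(f(x))^2\right)\circ\left(f(x^2)+(f(x))^2\right)=0,\qquad\forall x\in J_1.
\]
Writing $h=f-\delta_f$ and $k=f+\delta_f$, so that $h(x^2)=f(x^2)-(f(x))^2$ and $k(x^2)=f(x^2)+(f(x))^2$ (using $(f(x))^2=\delta_f(x^2)$), this says exactly $h(x^2)\circ k(x^2)=0$.

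Finally I would try to pass from this pointwise orthogonality to the stated dichotomy. The identities above give the intertwining rules $h(u)\circ f(z)=-h(u\circ z)$ and $k(u)\circ f(z)=k(u\circ z)$, so $h(J_1)$ and $k(J_1)$ are each invariant under multiplication by $f(J_1)$; these, together with the hypothesis $Ann_f(J_2)=\{0\}$ and perfectness of $J_1$, are the tools with which one hopes to force one of $f(x^2)-(f(x))^2$ or $f(x^2)+(f(x))^2$ to vanish identically, whence $\delta_f=\pm f$ and $f(x^2)=\delta_f(x^2)=\pm(f(x))^2$. I expect this separation of the two signs to be the main obstacle. The orthogonality relation only says that the two candidate error terms multiply to zero, and since $h(x^2)\circ k(x^2)$ is a quartic expression in $x$ one would first want to polarise it (legitimate as $\operatorname{char}\mathbf{F}\neq 2,3$ makes $4!$ invertible) and invoke $J_1=J_1'$ to spread the vanishing over a spanning set; but turning ``$h$ times $k$ is zero'' into the vanishing of one factor, uniformly in $x$, is genuinely delicate, and it is exactly the triviality of $Ann_f(J_2)$ combined with perfectness that must be brought to bear to exclude the possibility — familiar from the Lie-algebra analogue of Zhou — that the two signs are realised simultaneously on complementary parts of the image.
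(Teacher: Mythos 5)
Your identities are all correct, and up to the last step you have in fact reproduced the paper's own argument. The paper computes $f(x^{2}\circ y)\circ f(x)$ in two ways --- as $\delta_{f}((x^{2}\circ y)\circ x)=\delta_{f}(x^{2}\circ(y\circ x))=f(x^{2})\circ f(y\circ x)$, and, via the triple-homomorphism property together with the Jordan identity in $J_{2}$, as $(f(x)\circ f(x))\circ(f(y)\circ f(x))$ --- and then sets $y=x$ to obtain $(f(x^{2}))^{2}=((f(x))^{2})^{2}$, which is exactly your key relation (your power-associativity route $(f(x))^{2}\circ (f(x))^{2}=f(x^{3})\circ f(x)=\delta_{f}(x^{4})=f(x^{2})\circ f(x^{2})$ is an equivalent specialization). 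Where you and the paper part ways is the final deduction: the paper simply asserts that this identity implies $f(x^{2})=(f(x))^{2}$ or $f(x^{2})=-(f(x))^{2}$, reading the disjunction pointwise, i.e.\ for each fixed $x$ separately. You instead set yourself the strictly stronger, global task of showing $\delta_{f}=\pm f$ with one sign uniform over all of $J_{1}$, and then stall. That global reading is more than the theorem asks: Corollary \ref{cor:3.4} treats ``$f(x^{2})=(f(x))^{2}$ for all $x$'' as an additional hypothesis rather than an automatic alternative, and Example \ref{ex:3.2} realizes the all-minus case, so the intended content is only the pointwise dichotomy; there is nothing to ``spread over'' $J_{1}$ by polarisation.

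That said, your proposal as written is incomplete --- it never derives the stated conclusion --- and the hole is genuine even for the pointwise statement: in a Jordan algebra with zero divisors, $A^{2}=B^{2}$, equivalently $(A-B)\circ(A+B)=0$, does not force $A=\pm B$ (take $A=\mathrm{diag}(1,-1)$ and $B=I$ in the special Jordan algebra of $2\times 2$ matrices). Your orthogonality relation and the intertwining rules for $h=f-\delta_{f}$ and $k=f+\delta_{f}$ are all correct, but you do not show how $Ann_{f}(J_{2})=\{0\}$ and perfectness of $J_{1}$ close this gap --- and, notably, the paper offers no such argument either: its proof dispatches the implication in the single phrase ``which implies.'' So the step you flag as ``the main obstacle'' is precisely the step the paper's proof glosses over; you have not missed any idea that the paper actually contains, but neither have you produced a complete proof of the theorem.
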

\begin{proof}
For any $x, y \in J_{1}$, we have
\[f(x^{2} \circ y) \circ f(x) = \delta_{f}((x^{2} \circ y) \circ x) = \delta_{f}(x^{2} \circ (y \circ x)) = f(x^{2}) \circ f(y \circ x).\]
On the other hand,
\[f(x^{2} \circ y) \circ f(x) = ((f(x) \circ f(x)) \circ f(y)) \circ f(x) = (f(x) \circ f(x)) \circ (f(y) \circ f(x)).\]
Hence, we have
\[f(x^{2}) \circ f(y \circ x) = (f(x) \circ f(x)) \circ (f(y) \circ f(x)).\]
In particular, take $x = y$, we have
\[(f(x^{2}))^{2} = (f(x) \circ f(x))^{2},\]
which implies that
\[f(x^{2}) = (f(x))^{2}\; or\; f(x^{2}) = -(f(x))^{2}.\]
\end{proof}
\begin{cor}\label{cor:3.4}
Suppose $J_{1}$ is a perfect Jordan algebra over $\rm{F}$and $f$ is a triple homomorphism from $J_{1}$ to $J_{2}$ satisfying $Ann_{f}(J_{2}) = \{0\}$ where $J_{2}$ is an arbitrary Jordan algebra over $\rm{F}$. Then $f$ is a homomorphism if and only if $f(x^{2}) = (f(x))^{2}$, $\forall x \in J_{1}$ when $char \rm{F} \neq 2$.
\end{cor}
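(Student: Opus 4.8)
The plan is to establish the two implications of the biconditional separately, with the forward direction being immediate and the reverse direction carrying all the content. For the forward direction, I would assume $f$ is a homomorphism, so that $f(u \circ v) = f(u) \circ f(v)$ for all $u, v \in J_{1}$, and simply specialize to $v = u = x$ to obtain $f(x^{2}) = f(x \circ x) = f(x) \circ f(x) = (f(x))^{2}$ for every $x \in J_{1}$. This direction uses neither perfectness, nor the triple-homomorphism hypothesis, nor $Ann_{f}(J_{2}) = \{0\}$; it is pure bookkeeping.

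For the reverse direction I would assume $f(x^{2}) = (f(x))^{2}$ for all $x$ and deduce $f(x \circ y) = f(x) \circ f(y)$ for all $x, y \in J_{1}$. The tool is the polarization identity, which is available precisely because $\mathrm{char}\,\mathrm{F} \neq 2$: expanding $(x+y)^{2} = x^{2} + 2(x \circ y) + y^{2}$ (using commutativity of $\circ$) yields
\[ x \circ y = \tfrac{1}{2}\left( (x+y)^{2} - x^{2} - y^{2} \right). \]
I would apply $f$, pull it through the sum by linearity, and then invoke the standing hypothesis $f(z^{2}) = (f(z))^{2}$ at $z = x+y$, $z = x$, and $z = y$ to reach
\[ f(x \circ y) = \tfrac{1}{2}\left( (f(x+y))^{2} - (f(x))^{2} - (f(y))^{2} \right). \]
Finally I would use linearity once more to write $f(x+y) = f(x) + f(y)$ and expand $(f(x) + f(y))^{2} = (f(x))^{2} + 2\, f(x) \circ f(y) + (f(y))^{2}$; the two squared terms cancel and leave $f(x \circ y) = f(x) \circ f(y)$, so $f$ is a homomorphism.

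I do not expect a genuine obstacle here: the whole argument reduces to the polarization identity together with the linearity of $f$, and the only hypothesis actually consumed is $\mathrm{char}\,\mathrm{F} \neq 2$, which is needed both to recover $x \circ y$ from squares and to divide by $2$. The interest of the statement is therefore contextual rather than computational: read against the dichotomy of Theorem~\ref{thm:3.3}, where one only knows $f(x^{2}) = (f(x))^{2}$ or $f(x^{2}) = -(f(x))^{2}$, this corollary isolates the plus-sign branch as precisely the homomorphism case.
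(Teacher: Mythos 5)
Your proposal is correct and matches the paper's argument: the forward direction by specializing $x = y$, and the reverse direction by polarizing $f(z^{2}) = (f(z))^{2}$ at $z = x + y$ and cancelling, using $char \rm{F} \neq 2$ to divide by $2$. The paper phrases the reverse step as ``replace $x$ by $x+y$'' rather than isolating $x \circ y$ first, but this is the identical computation.
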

\begin{proof}
Suppose that $f$ is a homomorphism, then for any $x, y \in J_{1}$, we have
\[f(x \circ y) = f(x) \circ f(y).\]
In particular, take $x = y$, we have $f(x^{2}) = f(x) \circ f(x) = (f(x))^{2}$.

Now suppose that $f(x^{2}) = (f(x))^{2}$, $\forall x \in J_{1}$.

Replace $x$ by $x + y$ where $y \in J_{1}$ in the above equation. Then we have
\[f((x + y)^{2}) = f(x + y) \circ f(x + y),\]
\[f(x^{2}) + 2f(x \circ y) + f(y^{2}) = f(x) \circ f(x) + 2f(x) \circ f(y) + f(y) \circ f(y),\]
since $char \rm{F} \neq 2$, we have
\[f(x \circ y) = f(x) \circ f(y),\]
which implies that $f$ is a homomorphism.
\end{proof}
\begin{re}
The triple homomorphism in example \ref{ex:3.2} satisfies $f(x^{2}) = -(f(x))^{2}$ for all $x \in J$.
\end{re}

Next we'll study triple homomorphisms on any Jordan algebra. At first, we'll give a definition about our study.
\begin{defn}
Suppose that $J_{1}$, $J_{2}$ are Jordan algebras over $\rm{F}$ and $f : J_{1} \rightarrow J_{2}$ is a triple homomorphism. Then $f$ is called a special triple homomorphism if $f(J_{1}^{''}) = 0$ where $J_{1}^{''}$ denotes all elements of type $(x \circ y) \circ z, \; x, y, z \in J_{1}$.
\end{defn}

It's obvious that every triple homomorphism $f : J_{1} \rightarrow J_{2}$ satisfies $f(J_{1}^{''}) \subseteq J_{2}^{''}$. Thus we have a triple homomorphism $f^{'} : J_{1}^{''} \rightarrow J_{2}^{''}$by restricting $f$ to $J_{1}^{''}$.
\begin{thm}\label{thm:3.7}
Let $J_{1}$, $J_{2}$ be Jordan algebras over $\rm{F}$.
\begin{enumerate}[(1)]
\item Up to a special triple homomorphism, any triple homomorphism $f$ from $J_{1}$ to $J_{2}$ is an extension of a unique triple homomorphism from $J_{1}^{''}$ to $J_{2}^{''}$.
\item If $J_{1}$ is perfect, then there no nonzero special triple homomorphism from $J_{1}$ to $J_{2}$ where $J_{2}$ is an arbitrary Jordan algebra.
\end{enumerate}
\end{thm}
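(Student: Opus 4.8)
The plan is to treat the two parts by the filtration philosophy of Lemma~\ref{le:2.12}, with the chain $J_{1} \supseteq J_{1}^{''} \supseteq (J_{1}^{''})^{''} \supseteq \cdots$ replacing the chain $J \supseteq J^{'}$ used there. I would prove part (2) first, since it is cleaner and it isolates the role of ``special''. The crucial point is that a perfect $J_{1}$, meaning $J_{1} = J_{1}^{'} = J_{1}\circ J_{1}$, satisfies $J_{1}^{''} = J_{1}$: by definition $J_{1}^{''}$ is the span of all $(x\circ y)\circ z$, that is $(J_{1}\circ J_{1})\circ J_{1} = J_{1}^{'}\circ J_{1}$, and writing any $a \in J_{1} = J_{1}^{'}$ as $a = \sum_{k} x_{k}\circ y_{k}$ shows $a\circ z = \sum_{k}(x_{k}\circ y_{k})\circ z \in J_{1}^{''}$, so $J_{1} = J_{1}^{'} \subseteq J_{1}^{''} \subseteq J_{1}$. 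Since a special triple homomorphism is by definition required to kill $J_{1}^{''}$, it then kills all of $J_{1}$ and is therefore zero. Thus part (2) is a two-line argument that pinpoints where perfectness enters: it collapses the filtration already at the first step.

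For part (1) I would first record the two structural facts noted just before the theorem: every triple homomorphism $f:J_{1}\to J_{2}$ sends $J_{1}^{''}$ into $J_{2}^{''}$, so restriction produces a triple homomorphism $f^{'}:J_{1}^{''}\to J_{2}^{''}$ that is manifestly determined by $f$; and, conversely, $f$ is special precisely when this restriction vanishes. The remaining content is the uniqueness-up-to-special assertion, and the natural attempt, imitating Lemma~\ref{le:2.12}(i), is to take two triple homomorphisms $f_{1},f_{2}$ with $f_{1}^{'} = f_{2}^{'}$, set $h = f_{1} - f_{2}$, and show $h$ is special. One half is immediate: since $f_{1}$ and $f_{2}$ agree on $J_{1}^{''}$, the map $h$ vanishes on $J_{1}^{''}$, which both supplies the defining condition $h(J_{1}^{''}) = 0$ and forces the left-hand side $h((x\circ y)\circ z)$ of the triple identity to vanish.

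The hard part --- and here the analogy with biderivations genuinely breaks --- is that the triple homomorphism condition is cubic, so triple homomorphisms are not closed under subtraction, and one must still establish $(h(x)\circ h(y))\circ h(z) = 0$ for all $x,y,z$. This does not follow formally from $h(J_{1}^{''}) = 0$ together with the equality of the induced trilinear maps $(f_{i}(x)\circ f_{i}(y))\circ f_{i}(z)$: writing $f_{1} = f_{2} + h$ and expanding, the terms of degree one and two in $h$ survive and block the isolation of the degree-three term. That this term can be nonzero is visible already for $J_{1} = \rm{F}e$ with $e\circ e = 0$, where $J_{1}^{''} = 0$ and a linear map $f$ is a triple homomorphism exactly when $(f(e)\circ f(e))\circ f(e) = 0$, a non-linear constraint whose solution set is not subtraction-closed. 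I would therefore treat this step with care, either by reading ``up to a special triple homomorphism'' as the weaker statement that $f_{1}$ and $f_{2}$ coincide on $J_{1}^{''}$ and may differ only on a fixed complement of $J_{1}^{''}$ --- which is all the reduction algorithm actually needs --- or by adding a hypothesis under which the cross terms are controllable.

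Finally I would assemble the descent as in the biderivation algorithm: part (1) pushes the classification of triple homomorphisms on $J_{1}$ down to $J_{1}^{''}$, while part (2) guarantees that once the iterated chain reaches a perfect stage no residual special ambiguity remains, so the procedure terminates.
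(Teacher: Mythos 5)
Your argument is essentially the paper's own proof, and in one respect it is more careful. For part (2) you supply the step the paper leaves implicit: perfectness gives $J_{1}^{''} = (J_{1}\circ J_{1})\circ J_{1} = J_{1}\circ J_{1} = J_{1}$, which is exactly what licenses the paper's line $f(J_{1}) = f(J_{1}^{''}) = 0$. For part (1) your subtraction argument ($h = f_{1}-f_{2}$, then $h(J_{1}^{''})=0$) is verbatim the paper's proof of that part.

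The obstruction you flag in part (1) is genuine, and it sits in the paper's proof, not just in your reconstruction. By the paper's definition a special triple homomorphism must in particular \emph{be} a triple homomorphism, yet the proof concludes that $f_{1}-f_{2}$ is special from $(f_{1}-f_{2})(J_{1}^{''})=0$ alone, never checking the cubic identity for the difference; this is precisely where the analogy with Lemma \ref{le:2.12} breaks, since the biderivation identities are linear in $\delta$ (so differences of biderivations are biderivations) while the triple homomorphism identity is cubic in $f$. Your rank-one example does kill the literal claim; to make it fully explicit, take $J_{1}=\mathrm{F}e$ with $e\circ e=0$, so $J_{1}^{''}=0$, and $J_{2}=\mathrm{F}[s,t]/(s^{3},t^{3})$ (commutative associative, hence Jordan), with $f_{1}(e)=s$ and $f_{2}(e)=t$. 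Both are triple homomorphisms because $s^{3}=t^{3}=0$, and both restrict to zero on $J_{1}^{''}$, but $(f_{1}-f_{2})(e)=s-t$ satisfies $(s-t)^{3}=3st(t-s)\neq 0$ when $\mathrm{char}\,\mathrm{F}\neq 3$, so $f_{1}-f_{2}$ is not a triple homomorphism and hence not a special one. Thus Theorem \ref{thm:3.7}(1) fails under its natural reading, and your proposed repair --- read the uniqueness assertion as saying only that the restriction to $J_{1}^{''}$ is uniquely determined by $f$, i.e.\ that two extensions must agree on $J_{1}^{''}$, which is all the descent algorithm actually uses --- is the right one. Part (2), and with it the termination of the algorithm at a perfect stage, is untouched by this issue.
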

\begin{proof}
(1). Suppose that $f_{1}$, $f_{2}$ are two triple homomorphisms from $J_{1}$ to $J_{2}$ such that $f_{1}^{'} = f_{2}^{'}$. Let $f = f_{1} - f_{2}$. Then we have
\[f(J_{1}^{''}) = (f_{1} - f_{2})(J_{1}^{''}) = (f_{1}^{'} - f_{2}^{'})(J_{1}^{''}) = 0,\]
which implies that $f$ is a special triple homomorphism.

(2). Suppose that $f$ is a special triple homomorphism from $J_{1}$ to $J_{2}$, i.e., $f(J_{1}^{''}) = 0$. Note that $J_{1}$ is perfect, we have
\[f(J_{1}) = f(J_{1}^{''}) = 0,\]
which implies that $f$ is the zero homomorphism.
\end{proof}

Now we will give an algorithm for finding all triple homomorphisms from any Jordan algebra $J_{1}$ to $J_{2}$ using Theorem \ref{thm:3.7}. If $J_{1}$ is not perfect, using Theorem \ref{thm:3.7} we reduce the problem of finding triple homomorphisms from $J_{1}$ to $J_{2}$ to the problem of finding triple homomorphisms from $J_{1}^{''}$ to $J_{2}^{''}$.


\begin{thebibliography}{99}
\bibitem{A1} A. A. Albert. {\it A structure theory for Jordan algebras.} Ann. of Math. 48(1947), No. 3, 546-567.
\bibitem{A2} A. A. Albert. {\it On Jordan algebras of linear transformations.} Trans. Amer. Math. Soc. 59(1946), No. 3, 524-555.
\bibitem{A3} A. A. Albert. {\it On a certain algebra of quantum mechanics.} Ann. of Math. 35(1934), No. 1, 65-73.
\bibitem{REB1} R. E. Block. {\it Determination of differentiably simple rings with a minmal ideal.} Ann. of Math. 90(1969), No. 3, 433-459.
\bibitem{M1} M. Bre$\check{s}$ar. {\it Centralizing mappings and derivations in prime rings.} J. Algebra 156(1993), No. 2, 385-394.
\bibitem{M2} M. Bre$\check{s}$ar. {\it On generalized biderivations and related maps.} J. Algebra 172(1995), No. 3, 764-786.
\bibitem{MK} M. Bre$\check{s}$ar, K. M. Zhao. {\it Biderivations and commuting linear maps on Lie algebras.} J. Lie Theory 28(2018), No. 3, 885-900.
\bibitem{MWC} M. Bre$\check{s}$ar, W. S. Martindale 3rd, C. R. Miers. {\it Centralizing maps in prime rings with involution.} J. Algebra 161(1993), No. 2, 342-357.
\bibitem{ALM1} A. Carotenuto, L. Dabrowski, M. Dubois-Violette. {\it Differential calculus on Jordan algebras and Jordan modules.} Lett. Math. Phys. DOI:10.1007/s11005-018-1102-z.
\bibitem{SJC1} S. J. Cheng. {\it Differentiably simple Lie superalgebras and representations of semisimple Lie superalgebras.} J. Algebra 173(1995), No. 1, 1-43.
\bibitem{DE1} E. Daniel. {\it Biderivations on tensor products of algebras.} Comm. Algebra 46(2018), No. 4, 1722-1726.
\bibitem{DW1} Y. Q. Du, Y. Wang. {\it Biderivations of generalized matrix algebras.} Linear Algebra Appl. 438(2013), No. 11, 4483-4499.
\bibitem{N1} N. Jacobson. {\it General representation theory of Jordan algebras.} Trans. Amer. Math. Soc. 70(1951), No. 3, 509-530.
\bibitem{NJ1} N. Jacobson. {\it Structure of Rings.} Amer. Math. Soc. Providence, R. I. 37(1964).
\bibitem{BC1} B. Jeffrey, L. Carini. {\it Derivations with invertible values on a Lie ideal.} Canad. Math. Bull. 31(1988), No. 1, 103-110.
\bibitem{BHL1} B. Jeffrey, I. N. Herstein, L. Charles. {\it Derivations with invertible values.} Canad. J. Math. 35(1983), No. 2, 300-310.
\bibitem{PJE1} P. Jordan, J. von. Neumann, E. Winger. {\it On an algebraic generalization of the quantum mechanic formalism.} Ann. of Math. 35(1934), No.1, 29-64.
\bibitem{VGK1} V. G. Kac. {\it Lie Superalgebras.} Adv. Math. 26(1977), No. 1, 8-96.
\bibitem{CL1} C. H.  Lam. {\it Construction of vertex operator algebras from commutative associative algebras.} Comm. Algebra 24(1996), No. 14, 4339-4360.
\bibitem{CL2} C. H. Lam. {\it On VOA associated with special Jordan algebras.} Comm. Algebra 27(1999), No. 4, 1665-1681.
\bibitem{FL1} F. Marcelo, L. Alicia. {\it Representations of generalized almost-Jordan algebras.} Comm. Algebra 43(2015), No. 8, 3372-3381.
\bibitem{RD1} R. D. Schafer. {\it A theorem on the derivations of Jordan algebras.} Proc. Amer. Math. Soc. 2(1951), No. 2, 290-294.
\bibitem{TA1} A. Takahiro. {\it On a VOA associated with the simple Jordan algebra of type D.} Comm. Algebra 39(2011), No. 6, 2097-2113.
\bibitem{Z1} J. H. Zhou. {\it Triple homomorphism of perfect Lie algebras.} Comm. Algebra 42(2014), No. 9, 3724-3730.
\end{thebibliography}
\end{document}